\documentclass[12pt,letterpaper]{article}
\usepackage{graphicx}
\usepackage{amsmath}
\usepackage{amsfonts}
\usepackage{amsthm}
\usepackage{amssymb}
\usepackage{color}
\def\R{\mathbb{R}}
\usepackage{verbatim} 
\usepackage{hyperref}

\newtheorem{theorem}{Theorem}[section]
\newtheorem{lemma}[theorem]{Lemma}
\newtheorem{corollary}[theorem]{Corollary}

\newcommand{\be}{\begin{equation}}
\newcommand{\ee}{\end{equation}}
\newcommand{\bea}{\begin{eqnarray}}
\newcommand{\eea}{\end{eqnarray}}
\newcommand{\beas}{\begin{eqnarray*}}
\newcommand{\eeas}{\end{eqnarray*}}

\begin{document}

\title{Fission of Halving Edges Graphs}

\author{Tanya Khovanova\\MIT \and Dai Yang\\MIT}

\maketitle

\begin{abstract}
In this paper we discuss an operation on halving edges graph that we call fission. Fission replaces each point in a given configuration with a small cluster of $k$ points. The operation interacts nicely with halving edges, so we examine its properties in detail.
\end{abstract}

\section{Introduction}

Halving lines have been an interesting object of study for a long time. Given $n$ points in general position on a plane the minimum number of halving lines is $n/2$. The maximum number of halving lines is unknown. The current lower bound of $\Omega(ne^{\sqrt{\ln 4}\sqrt{\ln n}}/\ln n)$ was found by Nivasch \cite{Nivasch}, an improvement from T\'{o}th's lower bound \cite{Toth}.

The current asymptotic upper bound of $O(n^{4/3})$ was proven by Dey \cite{Dey98}. In 2006 a tighter bound for the crossing number was found \cite{PRTT}, which also improved the upper bound for the number of halving lines. In our paper \cite{KY} we further tightened the Dey's bound. This was done by studying the properties of halving edges graphs, also called underlying graphs of halving lines. 

We discussed connected components of halving edges graphs in \cite{KY2}. In this paper we continue our studies of halving edges graphs. In particular, we introduce a construction which we call fission. Fission is replacing each point in a given configuration with a small cluster of $k$ points; this operation produces elegant results in relation to halving lines.

We start in Section~\ref{sec:definitions} with supplying necessary definitions and providing examples. In Section~\ref{sec:fission} we define fission and prove some results pertaining to the behavior of newly appearing halving lines. Then, in Section~\ref{sec:chains} we discuss the behavior of chains. In the next Section~\ref{sec:plain} we define and discuss the simplest case of fission called plain fission, in which no halving lines are generated within each cluster. In Section~\ref{sec:multiplication} we note the similarities between fission and multiplication as well as lifting. In Section~\ref{sec:parallel} and Section~\ref{sec:forest} we study two more special cases of fission. The former is called parallel fission and describes fission when all clusters are sets of points on lines parallel to each other. The latter is called a fission of a 1-forest and is an interesting example of fission where the starting graph is a 1-forest. In the last Section~\ref{sec:defission} we discuss the opposite operation---defission---which is analog of division for graphs that can be created through fission.

\section{Definitions and Examples}\label{sec:definitions}

Let $n$ points be in general position in $\R^2$, where $n$ is even. A \textit{halving line} is a line through 2 of the points that splits the remaining $n-2$ points into two sets of equal size.

From our set of $n$ points, we can determine an \textit{underlying graph} of $n$ vertices, where each pair of vertices is connected by an edge iff there is a halving line through the corresponding 2 points. The underlying graph is also called the \textit{halving edges graph}.

We denote the number of vertices in graph $G$ as $|G|$.

In dealing with halving lines, we consider notions from both Euclidean geometry and graph theory. A \textit{geometric graph}, or a \textit{geograph} for short, is a pair of sets $(V,E)$, where $V$ is a set
of points on the coordinate plane, and $E$ consists of pairs of elements from
$V$. In essence, a geograph is a graph with each of its vertices assigned
to a distinct point on the plane. Many problems in the intersection of geometry and graph theory have relevance to geographs, and the study of halving lines is no different. Earlier works on geographs include \cite{Pach}.

\subsection{Examples}

\subsubsection{Four points}

Suppose we have four non-collinear points. If their convex hull is a quadrilateral, then there are two halving lines. If their convex hull is a triangle, then there are three halving lines. Both cases are shown on Figure~\ref{fig:4points}.

\begin{figure}[htbp]\label{fig:squaretriangle}
\begin{center}
\includegraphics[scale=0.5]{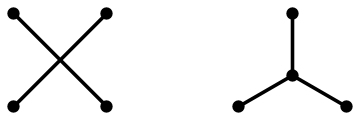}
\end{center}
  \caption{Underlying graphs for four points.}\label{fig:4points}
\end{figure}

\subsubsection{Polygon}

If all points belong to the convex hull of the point configuration, then each point lies on exactly one halving line. The number of halving lines is $n/2$, and the underlying graph is a matching graph --- a union of $n/2$ disjoint edges. The left side of Figure~\ref{fig:4points} shows an example of this configuration.

\subsubsection{Star}

If our point configuration is a regular $(n-1)$-gon and a point at its center, then the underlying graph is a star with a center of degree $n-1$. The configuration has $n-1$ halving lines. The right side of Figure~\ref{fig:4points} shows an example of this configuration.

\subsection{Chains}

Dey~\cite{Dey98} uses the notion of \textit{convex chains} to improve the upper bound on the maximum number of halving lines. Chains prove to be useful in proving other properties of the underlying graph as seen in \cite{KY}. The following construction partitions halving lines into chains:
\begin{enumerate}
\item Choose an orientation to define as ``up." The $\frac{n}{2}$ leftmost vertices are called the left half, and the rightmost vertices are called the right half.
\item Start with a vertex on the left half of the graph, and take a vertical line passing through this vertex.
\item Rotate this line clockwise until it either aligns itself with an edge, or becomes vertical again.
\item If it aligns itself with an edge in the underlying graph, define this edge to be part of the chain, and continue rotating the line about the rightmost vertex in the current chain.
\item If the line becomes vertical, we terminate the process. The set of edges in our set is defined as the chain.
\item Repeat step 2 on a different point on the left half of the underlying graph until every edge is part of a chain.
\end{enumerate}

The construction is illustrated in Figure~\ref{fig:chains}. The thickest broken path is the first chain. The next chain is thinner, and the third chain is the thinnest. Note that the chains we get are determined by which direction we choose as ``up."

\begin{figure}[htbp]
\begin{center}
\includegraphics[scale=0.4]{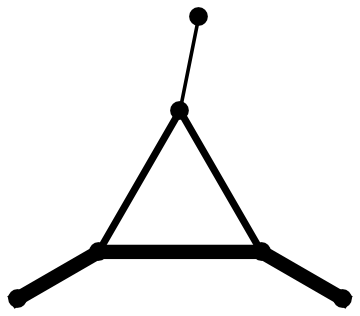}
\end{center}
  \caption{Chains.}\label{fig:chains}
\end{figure}

The following properties of chains follow immediately. Later properties on the list follow from the previous ones \cite{Dey98} and \cite{KY}:

\begin{itemize}
\item A vertex on the left half of the underlying graph is a left endpoint of a chain.
\item The process is reversible. We could start each chain from the right half and rotate the line counterclockwise instead, and obtain the same chains.
\item A vertex on the right half of the underlying graph is a right endpoint of a chain.
\item Every vertex is the endpoint of exactly one chain.
\item The number of chains is exactly $\frac{n}{2}$.
\item The degrees of the vertices are odd. Indeed, each vertex has one chain ending at it and several passing through it.
\item Every halving line is part of exactly one chain.
\end{itemize}

\subsection{Segmentarizing}

Suppose we have a set of points. Any affine transformation does not change the set of halving lines. Sometimes it is useful to picture that our points are squeezed into a long narrow rectangle. This way our points are almost on a segment. We call this procedure \textit{segmenterizing}, and we introduced it in~\cite{KY}. The Figure~\ref{fig:squeezing} shows three pictures. The first picture has six points, that we would squeeze towards the line $y=0$. The second picture shows the configuration squeezed by a factor of 10, and if we make the factor arbitrary large the points all lie very close to a segment as shown on the last picture.

\begin{figure}[htbp]
\begin{center}
\includegraphics[scale=0.7]{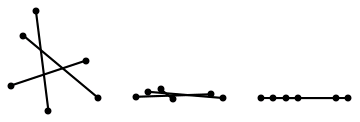}
\end{center}
  \caption{Segmenterizing.}\label{fig:squeezing}
\end{figure}

\subsection{Cross}

The following construction we call a \textit{cross} \cite{KY}. 

We form the cross as follows. We squeeze initial sets into long narrow segments. Then we intersect these segments at middle lines, so that half of the points of each segment lie on one side of all halving lines that pass through the points of the other segment (See Figure~\ref{fig:cross}).

Given two sets of points with $n_1$ and $n_2$ points respectively whose underlying geographs are $G_1$ and $G_2$, the cross is the construction of $n_1+n_2$ points on the plane whose underlying geograph has two isolated components $G_1$ and $G_2$. 

\begin{figure}[htbp]
\begin{center}
\includegraphics[scale=0.4]{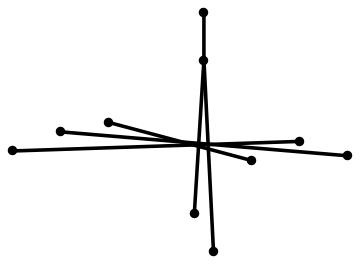}
\end{center}
  \caption{The Cross construction.}\label{fig:cross}
\end{figure}

Note that the left example in Figure~\ref{fig:4points} is a cross of two 2-paths.

\section{Fission}\label{sec:fission}

We introduce a construction that we call \textit{k-fission}. Replace each point in the graph $G$ by a small cluster of $k$ points that all are not more than $\epsilon$ away from the replaced point, for some small $\epsilon$. We call the cluster that replaces the point $v_i$ the \textit{$v_i$ cluster}. Figure~\ref{fig:2fission} shows a picture of a 2-fission of the star configuration from the right of Figure~\ref{fig:4points}.

How small should $\epsilon$ be? First, we want $\epsilon$ to be much smaller than any distance between the given points. This way different clusters do not interfere with each other. In addition, we want the lines connecting points from two different clusters not to pass through other clusters. For this purpose we introduce the notion of $\epsilon$-corridor. That means that $\epsilon$ is small enough so that if a line connects two points from the clusters of $v$ and $w$, then the other clusters are on the same side of the line if and only if the corresponding points prior to fission are on the same side of the line $vw$.

Given a segment, call all the directions that can be formed by lines connecting points that are not more than $\epsilon$ apart from the ends of the segment, its \textit{
$\epsilon$-corridor}. We assume that $\epsilon$ is so small that no two segments connecting points in our configuration $G$ have overlapping $\epsilon$-corridors.

\begin{figure}[htbp]
\begin{center}
\includegraphics[scale=0.2]{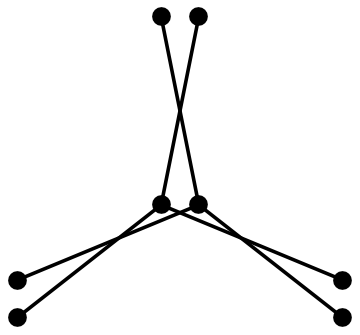}
\end{center}
  \caption{2-Fission example.}\label{fig:2fission}
\end{figure}

Given an underlying graph $G$ and its $k$-fission $H$, we know that $k|G| = |H|$. If $vw$ is an edge of $G$, then any edge of $H$ with one endpoint in the $v$ cluster and one endpoint in the $w$ cluster is said to \textit{traverse} these two clusters. 

We begin by proving the following two lemmas:

\begin{lemma}
If two points $v$ and $w$ do not form a halving edge of $G$, then there are no halving edges in $H$ traversing from the cluster of $v$ to the cluster of $w$.
\end{lemma}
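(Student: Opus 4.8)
The plan is to argue by a direct count of how the points of $H$ distribute across a candidate traversing line, exploiting the $\epsilon$-corridor property to pin down the location of everything except the points lying inside the two clusters being joined.

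First I would record what it means for $v$ and $w$ not to form a halving edge of $G$. The line $vw$ separates the other $n-2$ points into a set of size $a$ on one side and $b$ on the other, with $a+b = n-2$ and $a \ne b$. Since $n$ is even, $a+b$ is even, so $a$ and $b$ have the same parity; hence $a \ne b$ forces $|a-b| \ge 2$. This gap of at least $2$ is the quantitative fact that drives the whole argument.

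Next, take any candidate traversing edge of $H$, that is, a line $\ell$ through a point $v'$ in the $v$ cluster and a point $w'$ in the $w$ cluster, and count the points of $H$ on each side of $\ell$. By the $\epsilon$-corridor assumption, every cluster other than those of $v$ and $w$ lies entirely on the side of $\ell$ determined by the position of its center relative to the original line $vw$. So the $a$ clusters on one side contribute exactly $ka$ points there, and the $b$ clusters on the other side contribute exactly $kb$. The only points whose side is not dictated by the corridor are the $k-1$ remaining points of the $v$ cluster and the $k-1$ remaining points of the $w$ cluster. Letting $p$ and $q$ denote how many of these leftover points (from the $v$ and $w$ clusters respectively) fall on the first side, we get $ka + p + q$ points on one side and $kb + (2k-2) - (p+q)$ on the other. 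For $\ell$ to be a halving line these counts must be equal, which rearranges to $k(a-b) = (2k-2) - 2(p+q)$. The left-hand side has absolute value at least $2k$, while, since $0 \le p+q \le 2k-2$, the right-hand side has absolute value at most $2k-2$. These cannot be equal, so no such $\ell$ is a halving line, which proves the lemma.

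The step I expect to require the most care is the bookkeeping for the leftover intra-cluster points: their distribution across $\ell$ is genuinely uncontrolled, since it depends on the internal geometry of the clusters and on the slope of $\ell$. The argument therefore has to show that no choice of this distribution can close a gap of size $2k$ using a total of only $2k-2$ adjustable points. Making precise that exactly $k-1$ points from each of the two clusters are ``free,'' while everything else is pinned down by the corridor, is where I would be most careful.
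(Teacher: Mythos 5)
Your proof is correct and takes essentially the same approach as the paper's: use the $\epsilon$-corridor property to force at least $ka$ and $kb$ points of $H$ onto the two sides of the candidate line, then observe that the $2k-2$ leftover points of the $v$ and $w$ clusters cannot restore the balance. You in fact make explicit two details the paper leaves implicit --- the parity argument giving $|a-b|\ge 2$ (without which the $2k-2$ free points \emph{could} close a gap of only $k$), and the exact bookkeeping showing a discrepancy of at least $2k$ against an adjustment of at most $2k-2$.
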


\begin{proof}
Suppose the line $vw$ has $a$ points from $G$ on one side, and $n-a-2$ points on the other side, where $a$ is not $n/2$. Then a line traversing from the cluster of $v$ to the cluster of $w$ has at least $ka$ points from $H$ on one side and at least $k(n-a-2)$ points on the other side. The leftover $2k-2$ points cannot bring the balance to zero.
\end{proof}

\begin{lemma}\label{thm:traversing}
For any halving edge $vw$ of $G$, there are exactly $k$ corresponding halving edges in $H$ traversing from the cluster of $v$ to the cluster of $w$.
\end{lemma}

\begin{proof}
Consider the geograph $H'$ consisting of solely the $2k$ vertices in $H$ which are in the clusters of $v$ and $w$. Orient $H'$ so that the $k$ vertices which correspond to $v$ all lie to the left half of $H'$, and the $k$ vertices which correspond to $w$ all lie on the right half. Then since there are exactly $k$ chains in $H'$, and each chain contains one edge that traverses from the left half to the right half, there must be exactly $k$ edges between the cluster of $v$ and the cluster of $w$.

Now since $H$ is a fission of $G$, all the vertices of $H$ which belong to neither the cluster of $v$ nor $w$ must be divided exactly in half by any line of the form $v_1w_1$, where $v_1,w_1$ are vertices of $H$ such that $v_1$ is in the cluster of $v$ and $w_1$ is in the cluster of $w$. Hence, the edges that traverse from the cluster of $v$ to the cluster of $w$ are preserved upon deleting all the vertices of other clusters. But this gives precisely the geograph $H'$, so we are done.
\end{proof}

The previous proof gave us a description how traversing halving edges of a $k$-fission are arranged.

\begin{corollary}\label{thm:traversinginduced}
For any halving edge $vw$ of $G$, the $k$ corresponding traversing halving edges in $H$ are halving edges of a subgraph formed by $2k$ points of two clusters $v$ and $w$.
\end{corollary}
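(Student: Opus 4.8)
The plan is to show directly that the halving condition for a traversing edge in the full configuration $H$ reduces, after discarding a perfectly balanced contribution from the remaining clusters, to the halving condition in the $2k$-point subconfiguration. The driving tool is the $\epsilon$-corridor property, which tells us precisely how the points of the other clusters are distributed relative to any line that traverses the clusters of $v$ and $w$.

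First I would fix one of the traversing halving edges of $H$, say $v_1w_1$, where $v_1$ lies in the cluster of $v$ and $w_1$ lies in the cluster of $w$. By the definition of the $\epsilon$-corridor, every cluster other than those of $v$ and $w$ lies entirely on one side of the line $v_1w_1$, and on the same side on which the corresponding pre-fission point of $G$ lies relative to the line $vw$. Since $vw$ is a halving edge of $G$, exactly $(n-2)/2$ of the original points sit on each side of $vw$; hence exactly $k(n-2)/2$ of the points from the other clusters sit on each side of $v_1w_1$. In other words, the $k(n-2)$ points lying outside the clusters of $v$ and $w$ are split evenly by every traversing line.

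Next I would invoke the fact that $v_1w_1$ is a halving edge of the full configuration $H$, so it splits the remaining $nk-2$ points of $H$ into two equal sets of size $(nk-2)/2$. Subtracting the balanced count $k(n-2)/2$ from each side leaves the $2k-2$ points that belong to the clusters of $v$ and $w$ but are distinct from $v_1$ and $w_1$, and these must split as $(nk-2)/2 - k(n-2)/2 = k-1$ on each side. But a split of the remaining $2k-2$ points into two sets of size $k-1$ is exactly the halving condition for $v_1w_1$ inside the $2k$-point subconfiguration consisting of the clusters of $v$ and $w$. Therefore $v_1w_1$ is a halving edge of that subconfiguration, which is the assertion of the corollary.

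The hard part will be the careful handling of the $\epsilon$-corridor step: one must confirm that the corridor condition genuinely forces each remaining cluster to lie wholly on the side predicted by the pre-fission picture, with no cluster straddling the line $v_1w_1$. This is guaranteed by the standing assumption that $\epsilon$ is small enough for traversing lines to respect the sidedness of all other clusters, but it should be stated explicitly since the entire count of $k(n-2)/2$ per side hinges on it. Once that even split is secured, the rest is the short arithmetic above. I would also remark that this corollary is essentially the \emph{preserved upon deleting} step already present in the proof of Lemma~\ref{thm:traversing}, here recast as a self-contained statement about the induced $2k$-point subconfiguration.
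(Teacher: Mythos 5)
Your proposal is correct and follows essentially the same route as the paper: the paper derives this corollary directly from the second paragraph of the proof of Lemma~\ref{thm:traversing}, where it is observed that every traversing line splits the vertices outside the clusters of $v$ and $w$ exactly in half (via the $\epsilon$-corridor property), so the traversing halving edges are preserved upon deleting the other clusters. Your write-up simply makes the underlying count $(nk-2)/2 - k(n-2)/2 = k-1$ explicit, which is a fair elaboration rather than a different argument.
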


Intuitively, our lemmas assert that every edge of $G$ splits conveniently into $k$ traversing edges in $H$ under fission and there are no other traversing edges. However, it does not assert anything about the number of non-traversing edges of $H$, namely those whose endpoints lie in the same cluster. In Figure~\ref{fig:segment2fission} there is a 2-point configuration on the left. In the middle its 2-fission contains a halving line connecting two points within the cluster of the left vertex. On the right the 2-fission does not contain non-traversing edges.

\begin{figure}[htbp]
\begin{center}
\includegraphics[scale=0.5]{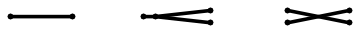}
\end{center}
  \caption{2-Path and its two different 2-fissions.}\label{fig:segment2fission}
\end{figure}

There are, however, restrictions on the orientation of such halving lines if they do appear.

\begin{theorem}\label{thm:corridor}
If there is a non-traversing halving line within a $v$ cluster, then the direction of this halving line is within the $\epsilon$-corridor of one of the halving edges of $G$ with an end point at $v$.
\end{theorem}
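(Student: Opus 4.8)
The plan is to analyze how the non-traversing line $L = v_1 v_2$, with $v_1$ and $v_2$ both in the $v$ cluster, divides the remaining $kn-2$ points of $H$, and to combine a counting argument with the $\epsilon$-corridor hypothesis. First I would classify each of the $n-1$ clusters other than $v$ as lying entirely on one side of $L$ or as being split by it (general position rules out a third point of $H$ lying on $L$), and likewise record that $s$ of the remaining $k-2$ points of the $v$ cluster fall on a fixed side, with $0 \le s \le k-2$. If no cluster other than $v$ were split, then one side of $L$ would contain exactly $kA + s$ points, where $A$ is the number of whole clusters on that side; setting this equal to $(kn-2)/2 = kn/2 - 1$ and reducing modulo $k$ forces $s \equiv k-1 \pmod k$, which is impossible for $0 \le s \le k-2$. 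Hence $L$ must pass through, and split, at least one other cluster.

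Next I would use the disjointness of $\epsilon$-corridors to show that $L$ splits exactly one other cluster. If $L$ splits a cluster $w$, then $L$ contains a point within $\epsilon$ of $v$, namely $v_1$, and it meets the region of the $w$ cluster, which lies within $\epsilon$ of $w$; hence the direction of $L$ lies in the $\epsilon$-corridor of the segment $vw$. If two distinct clusters $w$ and $w'$ were both split, the direction of $L$ would lie simultaneously in the corridors of $vw$ and $vw'$, contradicting the assumption that distinct segments have non-overlapping $\epsilon$-corridors. Together with the previous step, exactly one cluster $w$ is split, and the direction of $L$ already lies in the corridor of $vw$.

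It then remains to confirm that $vw$ is itself a halving edge of $G$, so that the corridor exhibited is indeed the corridor of a halving edge with an endpoint at $v$. Here I would invoke the defining property of the $\epsilon$-corridor: because the direction of $L$ lies in the corridor of $vw$ and $L$ separates the two clusters, every cluster other than $v$ and $w$ lies on the same side of $L$ as its pre-fission point lies relative to the line $vw$. Re-running the count in the ``exactly one split cluster'' case then forces each side to receive $n/2 - 1$ of the $n-2$ whole clusters, so the line $vw$ carries $(n-2)/2$ points of $G$ on each side and is halving. This gives the stated conclusion, and in fact identifies $w$ uniquely.

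The step I expect to be the main obstacle is the bookkeeping that ties the split of the clusters to the split of $G$ by $vw$. The corridor property is stated in the text for lines connecting a point of the $v$ cluster to a point of the $w$ cluster, whereas $L$ has both its defining endpoints in the $v$ cluster. I would therefore have to argue that, since $L$ meets both cluster regions and every other cluster is bounded away from the lines of the corridor, $L$ separates the far clusters in exactly the way a genuine connecting line would, so that the corridor property still applies. Once that is in place, the parity computation modulo $k$ is the routine engine that pins down all of the counts.
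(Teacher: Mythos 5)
Your proposal is correct, and its core engine---the mod-$k$ count of points on each side of $L$ combined with the disjointness of $\epsilon$-corridors---is the same one the paper relies on, but the two arguments are packaged differently and yours actually proves more. The paper's proof is a two-sentence contrapositive: if the direction of $L$ lies in no $\epsilon$-corridor, then $L$ splits no other cluster, so the whole clusters cannot balance (there are an odd number, $n-1$, of them) and $L$ cannot be halving. Taken literally, this only establishes that the direction of a non-traversing halving line lies in \emph{some} corridor of a segment incident to $v$; it never addresses why that segment must be a \emph{halving edge} of $G$, which is what the theorem asserts. Your third step supplies exactly this missing piece: having pinned down, via corridor disjointness, that exactly one other cluster $w$ is split, you use the separation property of the corridor to transfer the count back to $G$, forcing $s+t=k-1$, hence $n/2-1$ whole clusters on each side of $L$, and hence $(n-2)/2$ points of $G$ on each side of the line $vw$. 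The caveat you raise is real but benign: the paper states the separation property only for lines joining a point of the $v$ cluster to a point of the $w$ cluster, whereas your $L$ has both defining points in the $v$ cluster. Since $L$ splits the $w$ cluster, it passes within $\epsilon$ of $w$, so it is a line through points within $\epsilon$ of the two ends of the segment $vw$; the definition of the $\epsilon$-corridor quantifies over arbitrary points within $\epsilon$ of the segment's ends (not just configuration points), so the intended geometric content of the hypothesis applies verbatim to $L$. In short, your argument is a direct (rather than contrapositive) version of the paper's, and it closes a gap in the stated conclusion that the paper's own proof leaves open, at the cost of some extra bookkeeping.
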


\begin{proof}
If the direction of a line connecting two points in a cluster does not belong to any of the $\epsilon$-corridors, then the line cannot pass through any of the other clusters. That means that the number of full clusters on each side of the line is different and this line cannot be a halving line.
\end{proof}

Theorem~\ref{thm:corridor} shows that it is easy to build fission examples such that there are no halving lines within clusters. We just need to choose points in clusters in such a way that lines connecting any two points in the cluster have directions that do not coincide with any $\epsilon$ corridors. We will call a fission such that non-traversing edges do not appear a \textit{plain} fission. 

\begin{corollary}
A geograph $H$ is a plain $k$-fission of a geograph $G$ if and only if the number of edges of $H$ is $k$ times the number of edges of $G$.
\end{corollary}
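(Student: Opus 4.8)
The plan is to count the edges of $H$ by splitting them into two disjoint classes---those that traverse two distinct clusters and those whose two endpoints lie in a single cluster---and to show that the size of the first class is completely determined by $G$, independent of the geometric details of the fission. First I would observe that every edge of $H$ is either traversing or non-traversing, so that $|E(H)|$ equals the number of traversing edges plus the number of non-traversing edges, with no overlap between the two counts.

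Next I would pin down the number of traversing edges. By the first lemma of this section, no halving edge of $H$ traverses the clusters of $v$ and $w$ unless $vw$ is a halving edge of $G$; and by Lemma~\ref{thm:traversing}, each halving edge $vw$ of $G$ contributes exactly $k$ traversing edges to $H$. Summing over the edges of $G$, the total number of traversing edges in $H$ is exactly $k$ times the number of edges of $G$. This holds for \emph{any} $k$-fission $H$ of $G$, whether or not it is plain.

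With this in hand the equivalence follows immediately. Since $|E(H)| = k\,|E(G)| + (\text{number of non-traversing edges})$, we have $|E(H)| = k\,|E(G)|$ if and only if the number of non-traversing edges is zero. By definition a plain $k$-fission is precisely a $k$-fission in which no non-traversing edges appear, so the two conditions coincide, which gives both directions of the ``if and only if'' at once.

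The proof has no serious obstacle; the only points requiring care are that the decomposition of $E(H)$ into traversing and non-traversing edges is a genuine partition (every edge with endpoints in two distinct clusters is traversing, every edge with both endpoints in one cluster is non-traversing, and no edge is counted twice), and that the traversing-edge count is truly fixed for every $k$-fission---which is exactly what the two lemmas together guarantee. I would also flag that the statement presupposes $H$ is already a $k$-fission of $G$: the corollary characterizes which such fissions are plain via the edge count, rather than claiming that matching the edge count by itself forces $H$ to be a fission.
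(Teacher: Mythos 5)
Your proof is correct and is exactly the argument the paper intends (the paper states this corollary without proof, as an immediate consequence of the two traversing-edge lemmas and the definition of plain fission). Your closing caveat---that the statement must be read as characterizing plainness among $k$-fissions of $G$, not as claiming the edge count alone forces $H$ to be a fission---is a worthwhile clarification of how the corollary should be interpreted.
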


\section{Chains under Fission}\label{sec:chains}

It is natural to consider the relation between the chains of an underlying geograph, and the chains of its fission. We can use lemmas above to deduce that chains in $G$ split into $k$ chains in $H$.

\begin{theorem}
For every chain in $G$ with vertices $v_1,v_2,...,v_j$, there are exactly $k$ corresponding chains in $H$ which traverse from the cluster of $v_i$ to the cluster of $v_{i+1}$ for every $i$.
\end{theorem}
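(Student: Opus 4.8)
The plan is to run the rotating-line construction directly in $H$, using the same choice of ``up'' as in $G$, and to show that each chain built in $H$ shadows the chain $C=v_1,v_2,\dots,v_j$ one cluster at a time. Three tools drive the argument. First, Lemma~\ref{thm:traversing} supplies, for each edge $e_i=v_iv_{i+1}$ of $C$, exactly $k$ traversing halving edges; call this set $T_i$, so $\bigl|\bigcup_i T_i\bigr| = k(j-1)$. Second, the $\epsilon$-corridor structure forces the direction of every edge of $T_i$ to lie in the corridor of $e_i$, and by Theorem~\ref{thm:corridor} forces the direction of every non-traversing edge inside a cluster to lie in the corridor of some $G$-edge incident to that vertex. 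Third, distinct corridors are disjoint and occur, in clockwise rotation, in exactly the order of the directions of the underlying $G$-edges.

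First I would pin down the endpoints. Since $v_1$ is a left endpoint of $C$, its whole cluster lies in the left half of $H$, so the $k$ points of that cluster are left endpoints of $k$ distinct chains of $H$; call this set $\mathcal{D}_L$. Symmetrically the cluster of $v_j$ furnishes $k$ chains $\mathcal{D}_R$ ending on the right. The theorem then reduces to showing that every chain of $\mathcal{D}_L$ visits the clusters in the order $v_1,v_2,\dots,v_j$ and terminates inside the cluster of $v_j$, whence $\mathcal{D}_L=\mathcal{D}_R$ and both have size $k$.

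Next I would carry out the inductive sweep. Starting a chain $D\in\mathcal{D}_L$ at a point of the cluster of $v_1$, the line rotates clockwise from vertical; because $e_1$ is, by the construction in $G$, the first $G$-edge at $v_1$ in this rotation, the corridor of $e_1$ is the first corridor $D$ can align with. After possibly some non-traversing steps that remain inside the cluster of $v_1$ (their directions also lie in this first corridor), the first traversing edge of $D$ lies in $T_1$ and moves it to the cluster of $v_2$, and monotonicity of the chain direction forbids a return. Inductively, once $D$ sits in the cluster of $v_i$ having arrived through the corridor of $e_{i-1}$, the very next $G$-edge at $v_i$ clockwise is $e_i$, so no intervening corridor diverts it and the next traversing step lands $D$ in the cluster of $v_{i+1}$ via $T_i$. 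When $D$ reaches the cluster of $v_j$, the fact that $v_j$ is a right endpoint of $C$---no $G$-edge at $v_j$ lies clockwise between the direction of $e_{j-1}$ and vertical---means no further traversing or non-traversing edge is available before the line returns to vertical, so $D$ terminates inside the cluster of $v_j$.

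Finally I would close by counting. Each chain of $\mathcal{D}_L$ uses exactly one edge from each $T_i$, so these $k$ chains account for $k(j-1)$ traversing edges; since $\bigcup_i T_i$ has exactly $k(j-1)$ edges and every halving edge belongs to a unique chain, the chains of $\mathcal{D}_L$ are precisely the ones carrying the traversing edges of $C$, and no other chain can traverse from the cluster of $v_1$ to that of $v_2$. This yields exactly $k$ corresponding chains. The main obstacle is the middle step: one must rule out that non-traversing edges inside a cluster, or traversing edges to clusters off the chain, reorder or divert the sweep. This is exactly where Theorem~\ref{thm:corridor} together with the disjointness and ordering of corridors earns its keep---every stray edge is confined to a corridor whose position in the rotation matches the order of the $G$-edges, so the local clutter inside a cluster is always absorbed within the corridor of an adjacent chain edge and never changes which cluster $D$ visits next.
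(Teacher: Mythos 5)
Your skeleton (the $k$ points of the $v_1$ cluster start $k$ distinct chains of $H$; track them corridor by corridor; close with an edge count) is reasonable, but the central inductive step has a genuine gap, and it is not resolved by the tools you invoke in your last paragraph. When a chain $D$ sits at a point $p$ of the $v_i$ cluster, having arrived through the corridor of $e_{i-1}$, the chain construction advances $D$ along the next halving edge of $H$ at its \emph{current} point. Nothing you cite guarantees that $p$ (or the points $D$ reaches by non-traversing moves) has any halving edge of $H$ with direction in the corridor of $e_i$, nor that $D$ takes an edge of $T_i$ before the rotating line leaves that corridor. If $D$ exits the corridor of $e_i$ while still inside the $v_i$ cluster, its next alignment is with an edge in the corridor of some \emph{other} $G$-edge at $v_i$---one belonging to a different chain of $G$---and the sweep is diverted, which is precisely what must be excluded. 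Theorem~\ref{thm:corridor} cannot exclude it: it constrains only the possible \emph{directions} of non-traversing edges, not which edges a chain is forced to use. Concretely, take $k=2$ and let the two-cluster configuration be a ``star'' centered at a point $p_2$ of the $v_i$ cluster, so the traversing edges are $p_2q_1,p_2q_2$ and the only non-traversing edge is $p_1p_2$. A chain arriving at $p_1$ has the single corridor edge $p_1p_2$ available, and whether it then traverses depends on the direction of $p_1p_2$ falling strictly between the directions of $p_2q_1$ and $p_2q_2$---a geometric fact forced by $p_2$ lying inside the triangle $p_1q_1q_2$, not by disjointness and ordering of corridors. Closing the gap requires showing that every edge of $H$ whose direction lies in the corridor of $e_i$ has both endpoints in the two clusters and is a halving edge of the $2k$-point configuration they form (this is where Lemma~\ref{thm:traversing} and Corollary~\ref{thm:traversinginduced} do real work), and then arguing from the chain structure of that $2k$-point configuration that a chain entering the corridor on the $v_i$ side must leave it on the $v_{i+1}$ side.

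Note also that the paper takes a different route that avoids cluster-by-cluster tracking altogether: it defines an overlap relation ($D$ overlaps $C$ if $D$ contains a traversing edge corresponding to an edge of $C$), argues that each chain of $H$ overlaps at most one chain of $G$ by the determinism of the chain construction, and then pigeonholes using the fact that $H$ has exactly $k$ times as many chains as $G$; the $k$ chains overlapping $C$ must then absorb all $k(j-1)$ traversing edges. That global counting replaces your induction and endpoint analysis, and it concentrates the delicate local behavior in a single ``at most one overlap'' claim rather than spreading it across an inductive sweep; your proposal, as written, leaves that same local behavior unproven at every step of the induction.
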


\begin{proof}
If a chain $C$ in $G$ contains edge $vw$, and a chain $D$ in $H$ contains an edge traversing from the cluster of $v$ to the cluster of $w$, then we say that $D$ \textit{overlaps} with $C$. Chain $D$ cannot overlap with more than one chain $C$: this follows from the construction of chains, in which every edge of a chain determines the next. Now note that there are exactly $k$ times as many chains in $H$ than in $G$. Therefore, we must have exactly $k$ chains in $H$ which overlap with any chain $C$ in $G$. Since every edge in $C$ corresponds to $k$ traversing edges in $H$, the $k$ chains in $H$ overlapping with $C$ must traverse all the clusters corresponding to the vertices of $C$, as desired.
\end{proof}

\section{Plain Fissions}\label{sec:plain}

We know that in plain fission there are exactly $k$ edges traversing two clusters $v$ and $w$. Moreover, from the Corollary~\ref{thm:traversinginduced} we know that the having lines traversing $v$ and $w$ are the halving lines of the graph consisting of $2k$ vertices that belong to $v$ and $w$ clusters. As there are no more halving lines within the two clusters and each vertex of a graph has to have at least one halving line we get the following lemma.

\begin{lemma}\label{thm:covering}
Every vertex in a $v$ cluster of a plain fission has exactly one halving edge connecting to a $w$ cluster if and only if $v$ and $w$ are connected in $G$.
\end{lemma}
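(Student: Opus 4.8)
The plan is to reduce the whole statement to an elementary degree count carried out inside the subgraph on the two clusters, which I will denote $H'$: the configuration of the $2k$ points belonging to the clusters of $v$ and $w$. The two ingredients that make this possible are already in hand, namely Corollary~\ref{thm:traversinginduced}, which tells us that the $k$ traversing halving edges between the clusters are exactly the halving edges of this self-contained $2k$-point configuration, together with the hypothesis of plainness, which guarantees that $H'$ has no halving edges with both endpoints in a single cluster. Thus under plainness the full set of halving edges of $H'$ consists of precisely the $k$ traversing edges.

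For the substantive (backward) direction I would assume $vw$ is an edge of $G$ and count degrees in the halving edges graph of $H'$. On one hand, $H'$ has exactly $k$ halving edges, so the sum of the vertex degrees equals $2k$. On the other hand, $H'$ is a genuine configuration of $2k$ points in general position with $2k$ even, so every one of its $2k$ vertices has odd degree and therefore degree at least $1$; this forces the degree sum to be at least $2k$. Equality then forces every vertex of $H'$ to have degree exactly $1$. Since all edges of $H'$ run between the two clusters, each of the $k$ vertices in the $v$ cluster has exactly one halving edge, and it connects to the $w$ cluster, as claimed. For the forward direction I would argue by contraposition using the first lemma of this section: if $v$ and $w$ are not adjacent in $G$ there are no traversing halving edges at all, so no vertex of the $v$ cluster has even a single halving edge to the $w$ cluster; hence having exactly one such edge at every vertex forces $v$ and $w$ to be adjacent. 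Together the two directions establish the biconditional.

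The step requiring the most care, and the one I would flag as the main (if minor) obstacle, is the appeal to ``every vertex lies on at least one halving edge.'' This property rests on the chain structure (each vertex is an endpoint of exactly one chain, so its degree is odd and positive), and it applies only to an honest halving edges configuration. It would be a mistake to apply it to an arbitrary subgraph of $H$; the point is that Corollary~\ref{thm:traversinginduced} lets us regard the traversing edges as the halving edges of the free-standing $2k$-point configuration $H'$, so the odd-degree property legitimately holds for every vertex of $H'$. Once this reinterpretation is justified, the degree count is immediate and the lemma follows.
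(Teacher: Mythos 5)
Your proof is correct and takes essentially the same route as the paper: the paper's (inline) argument also passes to the $2k$-point configuration $H'$ via Corollary~\ref{thm:traversinginduced}, uses plainness to conclude that the only halving edges of $H'$ are the $k$ traversing ones, and invokes the fact that every vertex of a halving edges graph lies on at least one halving edge to force degree exactly $1$ at each of the $2k$ vertices. Your explicit degree-sum count and your contrapositive handling of the forward direction (via the section's first lemma) merely spell out steps the paper leaves implicit.
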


\begin{corollary}
Every vertex in the $v$ cluster of a plain fission has the same degree $d$, which is the degree of $v$ in $G$.
\end{corollary}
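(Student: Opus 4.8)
The plan is to compute the degree of an arbitrary vertex $v_1$ in the $v$ cluster by sorting its incident halving edges according to which cluster they land in, and then invoking Lemma~\ref{thm:covering} to count them. The statement being local to a single cluster, it suffices to fix one vertex $v_1$ of the $v$ cluster and show its degree is $\deg_G(v)$.

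First I would observe that since $H$ is a plain fission, there are no non-traversing halving edges, so every halving edge incident to $v_1$ must traverse into some other cluster. By the first lemma of Section~\ref{sec:fission}, a halving edge of $H$ can traverse between the clusters of $v$ and $w$ only when $vw$ is itself a halving edge of $G$; hence every edge at $v_1$ lands in the cluster of some neighbor $w$ of $v$ in $G$, and no edges stay within the $v$ cluster.

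Next I would fix a neighbor $w$ of $v$ and count the edges from $v_1$ into the $w$ cluster. By Lemma~\ref{thm:covering}, every vertex of the $v$ cluster---and in particular $v_1$---has exactly one halving edge connecting it to the $w$ cluster precisely because $v$ and $w$ are connected in $G$. Summing this count of one over all neighbors $w$ of $v$ then shows that the degree of $v_1$ equals the number of neighbors of $v$ in $G$, which is exactly $\deg_G(v) = d$. Since $v_1$ was arbitrary, every vertex of the $v$ cluster has degree $d$, and the final sentence identifying $d$ with the degree of $v$ in $G$ is immediate.

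The argument is essentially a bookkeeping exercise, so I do not anticipate a serious obstacle; the one point requiring care is to confirm that the three sources of potential edges at $v_1$ are mutually exclusive and jointly exhaustive---no within-cluster edges (guaranteed by plainness), no edges to clusters of non-neighbors (guaranteed by the first lemma), and exactly one edge to each neighboring cluster (guaranteed by Lemma~\ref{thm:covering}). Once this partition of the incident edges is verified, the degree count follows with no further computation.
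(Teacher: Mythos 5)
Your proof is correct and is exactly the argument the paper intends: the corollary is stated as an immediate consequence of Lemma~\ref{thm:covering}, and your bookkeeping (no within-cluster edges by plainness, no edges to non-neighbor clusters by the first lemma of Section~\ref{sec:fission}, exactly one edge per neighboring cluster by Lemma~\ref{thm:covering}) is precisely the implicit reasoning, spelled out carefully.
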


We already know from Figure~\ref{fig:segment2fission} that two different $k$-fissions of the same graph can produce different halving edges graphs. Is the same true for plain fissions? It turns out that two plain fissions of the same graph can indeed produce different graphs. Suppose graph $G$ has a cycle of length 3, as for example a graph with 6 vertices in Figure~\ref{fig:graph6}.

\begin{figure}[htbp]
\begin{center}
\includegraphics[scale=0.2]{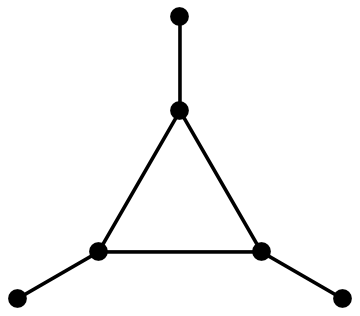}
\end{center}
  \caption{A halving edge graph with a 3-cycle.}\label{fig:graph6}
\end{figure}

This graph can produce two different resulting graphs under plain 2-fission. We will only draw the part of the graph that corresponds to the 3-cycle, since the halving edges of a fission of a subgraph do not depend on the rest of the graph, see Corollary~\ref{thm:traversinginduced}. In Figure~\ref{fig:differentfissions} the left 2-fission example consists of two 3-cycles, and the right example consists of a single 6-cycle.

\begin{figure}[htbp]
\begin{center}
\includegraphics[scale=0.3]{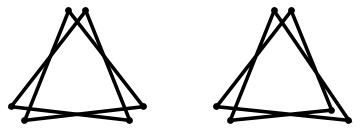}
\end{center}
  \caption{Plain 2-fissions can produce different results.}\label{fig:differentfissions}
\end{figure}

\section{Fission as Multiplication}\label{sec:multiplication}

Suppose that every cluster consists of the configurations of $k$ points. We can view this fission as multiplication of the halving edges graph by a configuration. If configuration does not have two points that generate a line within the direction of any $\epsilon$-corridor, then it is a plain fission. With such a multiplication the resulting graph has $k$ times more vertices and $k$ times more edges.

The identity operation in this multiplication is a $1$-fission: replacing every vertex by itself. We can also see that fission is transitive. We can consider an $m$-fission of a $k$-fission of a graph as a $km$-fission.

Does this multiplication depend on configuration or only on the number of points? Figure~\ref{fig:differentsameclusterfissions} shows a part of a 3-fission of a triangle subgraph of a graph. The resulting halving lines form a 9-cycle. We will see in the next section that a 3-fission of a triangle subgraph is a 3-cycle and a 6-cycle if the cluster consists of 3 nearly collinear points. This proves that the graph does depend on the configuration.

\begin{figure}[htbp]
\begin{center}
\includegraphics[scale=0.3]{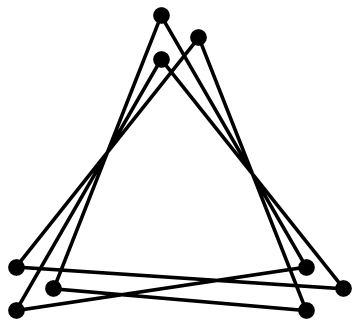}
\end{center}
  \caption{A 3-fission example of a triangle subgraph.}\label{fig:differentsameclusterfissions}
\end{figure}

Now we consider how fission affects connected components.

\begin{theorem}
If a graph $H$ is a $k$-fission of a connected graph $G$, then every connected component of $H$ passes through all the clusters. Moreover, any given connected component of $H$ has the same number of vertices in every cluster of $G$.
\end{theorem}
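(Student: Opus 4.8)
The plan is to reduce the entire statement to a single local claim: for every edge $vw$ of $G$ and every connected component $C$ of $H$, the number of vertices of $C$ lying in the $v$ cluster equals the number lying in the $w$ cluster. To see that this suffices, write $c_v$ for the number of vertices of $C$ in the $v$ cluster. The local claim says $c_v = c_w$ whenever $vw$ is an edge of $G$, so $c$ is constant along every edge of $G$; since $G$ is connected, $c_v$ takes a single value $c$ over all of $V(G)$. Because $C$ is nonempty, $\sum_v c_v = |C| > 0$ forces $c > 0$, and hence every cluster contains exactly $c > 0$ vertices of $C$. This yields simultaneously that $C$ meets every cluster and that it meets each cluster in the same number of vertices.

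To prove the local claim, I would work inside the geograph $H'$ on the $2k$ points of the $v$ and $w$ clusters, exactly as in the proof of Lemma~\ref{thm:traversing}, orienting it so that the $v$ cluster is the left half and the $w$ cluster is the right half (legitimate since the two cluster centers are distinct and $\epsilon$ is small). Applying the chain construction to $H'$ gives exactly $k$ chains, and each chain has its unique left endpoint in the $v$ cluster and its unique right endpoint in the $w$ cluster. Since every vertex of $H'$ is the endpoint of exactly one chain, matching the left endpoint of each chain with its right endpoint produces a bijection $\phi$ from the $v$ cluster to the $w$ cluster with the property that $x$ and $\phi(x)$ lie on a common chain, hence in the same connected component of $H'$.

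The main obstacle, and the step I expect to require the most care, is transferring this connectivity information from $H'$ to $H$: a priori a chain of $H'$ might use a within-cluster edge that is not a halving edge of the whole configuration. I would resolve this by showing $E(H') \subseteq E(H)$. The $k$ traversing edges of $H'$ coincide with the $k$ traversing edges of $H$ by Corollary~\ref{thm:traversinginduced}. For a non-traversing edge of $H'$ inside the $v$ cluster, the argument of Theorem~\ref{thm:corridor} forces its direction into the $\epsilon$-corridor of $vw$; and because $vw$ is a halving edge of $G$, every other cluster lies wholly on one side of such a line and the clusters split evenly, so this line halves the full configuration precisely when it halves the $2k$-point subconfiguration. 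Thus every edge of $H'$ is an edge of $H$, so $x$ and $\phi(x)$ are connected in $H$ as well. Restricting $\phi$ to $C$ then gives a bijection between $C \cap (v\text{-cluster})$ and $C \cap (w\text{-cluster})$, establishing $c_v = c_w$ and completing the proof; the remaining work is the corridor-and-balance count, which is routine once the containment is set up.
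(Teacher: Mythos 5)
Your proof is correct, and its overall skeleton---reducing the statement to the equality $c_v=c_w$ across the two clusters of a single halving edge $vw$ of $G$, and then spreading that equality over all of $V(G)$ by connectedness of $G$---is the same as the paper's. Where you genuinely differ is in the local step. The paper forms the same restricted geograph (its $R$ is your $H'$), invokes the result of \cite{KY2} that every connected component of a halving edges geograph has equally many vertices in the left and right halves, and then argues that $\mathcal{T}\cap R$ is a union of connected components of $R$, summing the balance over those components. You avoid the external citation entirely: the chain construction on $H'$ pairs the left endpoint of each of the $k$ chains with its right endpoint, giving an explicit bijection $\phi$ from the $v$ cluster to the $w$ cluster such that $x$ and $\phi(x)$ are connected, which then restricts to a bijection on any component of $H$. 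Notably, both routes require the containment $E(H')\subseteq E(H)$: the paper needs it for its claim that $\mathcal{T}\cap R$ is a union of $R$-components (a path in $R$ could a priori use a within-cluster edge absent from $H$), yet it only justifies the preservation of traversing edges and leaves the within-cluster case implicit. Your corridor-and-balance argument---the direction of such an edge is forced into the $\epsilon$-corridor of $vw$, and since $vw$ halves $G$ the remaining clusters split evenly, so halving the $2k$-point subconfiguration is equivalent to halving the whole configuration---fills exactly this gap. So your write-up is self-contained (resting only on the chain properties already listed in the paper) and is more careful than the paper's on this point; the trade-off is length, and the paper's route yields the slightly stronger intermediate fact that every component of $R$ is itself balanced between the two clusters.
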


\begin{proof}
Take any two points of $G$ which have halving lines passing through them, say $v$ and $w$. Let $\mathcal{T}$ be a connected component of $H$. It suffices to show that $\mathcal{T}$ has the same number of vertices in the clusters of $v$ and $w$. Removing all the vertices of $G$ not belonging to these two clusters does not affect the halving lines from $v$-cluster to $w$-cluster. Call the resulting graph $R$. Orient $R$ such that all the vertices of the $v$-cluster lie on the left half.

Suppose that $\mathcal{T}_R$ is a connected component of $R$. By \cite{KY2}, $\mathcal{T}_R$ has as many vertices on the left half of $R$ as on the right half. Hence, it has as many vertices in $v$ and $w$ clusters, regardless of orientation. However, this is true for any choice of connected component $\mathcal{T}_R$ of $R$, and the intersection of $\mathcal{T}$ and $R$ is simply a union of connected components of $R$. Hence, $\mathcal{T}$ itself must have equally many vertices in the clusters of $v$ and $w$.

By the same argument, given any two clusters of $H$ which have halving lines passing through them, we can conclude that $\mathcal{T}$ has the same number of vertices in each. But $H$ is a $k$-fission of $G$, which is connected, so in fact $\mathcal{T}$ has the same number of vertices in every cluster.
\end{proof}

\begin{corollary}\label{thm:fissionConCor}
If $G$ is connected, then the number of vertices in any connected component of its $k$-fission is a multiple of $|G|$.
\end{corollary}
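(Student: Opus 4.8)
The plan is to obtain this as an immediate counting consequence of the theorem that immediately precedes it, which was engineered precisely for this purpose. First I would fix a connected graph $G$ and let $H$ be its $k$-fission, observing that $H$ then consists of exactly $|G|$ clusters, one replacing each vertex of $G$. All the substantive geometric work—namely, that a connected component of $H$ meets every cluster and contains the same number of vertices in each—has already been established in the theorem, so the corollary reduces to arithmetic.

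Next, let $\mathcal{T}$ be an arbitrary connected component of $H$. By the second assertion of the theorem, $\mathcal{T}$ contains the same number of vertices in every cluster of $G$; I would name this common count $m$. The only point that requires any care is confirming that $m$ is genuinely well defined, i.e.\ independent of the chosen cluster, but this is exactly what the theorem guarantees, so no further argument is needed. Since each of the $|G|$ clusters contributes exactly $m$ vertices of $\mathcal{T}$, the total number of vertices of $\mathcal{T}$ equals $m \cdot |G|$, which is a multiple of $|G|$, as claimed.

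The main obstacle is effectively absent at this stage: the heavy lifting, including the appeal to the left/right balance result of \cite{KY2} and the reduction to the two-cluster subgraph $R$, was all done in proving the theorem. Consequently the corollary is purely a summation over the $|G|$ clusters, and the sole thing to verify is that adding the equal per-cluster counts across all clusters yields the stated divisibility by $|G|$.
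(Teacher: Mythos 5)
Your proof is correct and matches the paper's intent exactly: the paper leaves this corollary as an immediate consequence of the preceding theorem, and summing the common per-cluster count $m$ over the $|G|$ clusters to get $m\cdot|G|$ is precisely that intended argument.
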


Note that plain fission can be viewed as graph lifting or covering. If $G$ and $H$ are two graphs, then $H$ is called a \textit{covering graph} or a \textit{lift} of $G$ if there is a surjection from the points of $H$ to the points of $V$ and the restriction of this map to any vertex of $H$ is a bijection with respect to the neighborhoods \cite{Godsil}. There is a natural surjection of $H$, that is a $k$-fission of $G$, onto $G$: every point in the $v$-cluster maps to $v$.

\begin{theorem}
The $k$-fission graph $H$ is a covering of $G$, if the $k$-fission is plain.
\end{theorem}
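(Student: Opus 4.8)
The plan is to verify directly that the natural projection $\pi \colon H \to G$, which sends every vertex of the $v$-cluster to $v$, satisfies the definition of a covering map given just above: it is a surjection, and its restriction to the neighborhood of any vertex of $H$ is a bijection onto the corresponding neighborhood in $G$. Surjectivity is immediate, since each cluster contains $k \geq 1$ points and is therefore nonempty, so every vertex of $G$ is the image of some point of $H$.

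The heart of the argument is the local bijection condition. First I would fix a vertex $u$ in the $v$-cluster, so that $\pi(u) = v$, and describe the neighborhood $N_H(u)$ explicitly. Because the fission is plain, there are no non-traversing edges, so every neighbor of $u$ lies in a cluster different from the $v$-cluster. By Lemma~\ref{thm:covering}, the vertex $u$ has exactly one halving edge to the $w$-cluster precisely when $w$ is a neighbor of $v$ in $G$. Thus the neighbors of $u$ are in natural correspondence with the neighbors $w$ of $v$: to each $w$ adjacent to $v$ there corresponds a unique neighbor of $u$ lying in the $w$-cluster, and no other clusters contribute neighbors.

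It then remains to observe that $\pi$ sends each such neighbor of $u$ in the $w$-cluster to $w$, which lies in $N_G(\pi(u))$. The map $\pi|_{N_H(u)}$ is surjective onto $N_G(\pi(u))$ because Lemma~\ref{thm:covering} guarantees at least one edge from $u$ to the $w$-cluster for every neighbor $w$ of $v$, and it is injective because the same lemma guarantees at most one. This yields the required bijection of neighborhoods, completing the verification.

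I expect no serious obstacle here: essentially all the work has already been done in Lemma~\ref{thm:covering} (and in Lemma~\ref{thm:traversing} behind it), which pins down that each point of the $v$-cluster connects to each adjacent cluster by exactly one edge and to no other cluster. The only point requiring genuine care is to confirm where plainness is used---it is precisely what rules out intra-cluster (non-traversing) edges, which would otherwise produce neighbors of $u$ mapping back to $v$ itself and thereby destroy the local bijection.
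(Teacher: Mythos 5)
Your proposal is correct and follows exactly the paper's route: the paper's proof is a one-line appeal to Lemma~\ref{thm:covering}, and you have simply spelled out the details of why that lemma gives the local bijection of neighborhoods (plus the observation that plainness excludes intra-cluster edges). No discrepancy to report.
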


\begin{proof}
The proof directly follows from Lemma~\ref{thm:covering}.
\end{proof}

\section{Parallel Fission}\label{sec:parallel}

We now consider a special type of plain fission. Suppose that we replace each point by the same set of $k$ points on a line, and that the orientation of this line does not belong to any $\epsilon$-corridor. We call such a fission a \textit{parallel fission}. After properly rotating a parallel $k$-fission graph we can assume that points in every cluster form a horizontal line. Thus, we can identify the left and the right points in each cluster.

To prevent our $k$-fission graph from having collinear points, we can perturb the points very slightly to almost lie on a line within each cluster. In this manner, any two points in a cluster will be connected by a line almost in the same direction, and all the angular directions achieved by connecting two such points form a set of directions that does not overlap with any of the $\epsilon$-corridors of edges of graph $G$.

\begin{lemma}
A parallel $k$-fission of the halving edges graph $G$ can be divided into connected components, where each component contains only $m$-th left and $m$-th right points in each cluster.
\end{lemma}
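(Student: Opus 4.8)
The plan is to pin down exactly which points of two adjacent clusters are joined by the $k$ traversing halving edges, and then to observe that this pairing forces the claimed decomposition. After rotating so that every cluster is a (nearly) horizontal translate of one template $x_1 < x_2 < \cdots < x_k$, I would label the points of each cluster $1,2,\ldots,k$ from left to right; thus the $m$-th left point is index $m$ and the $m$-th right point is index $k+1-m$.

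First I would determine the traversing matching across a single edge $vw$ of $G$. By Corollary~\ref{thm:traversinginduced}, the $k$ traversing halving edges between the clusters of $v$ and $w$ are precisely the halving edges of the $2k$-point subconfiguration formed by these two clusters, so it suffices to analyze those $2k$ points. Placing the $v$-cluster at $(x_i,0)$ and the $w$-cluster at $(D+x_j,E)$ with $E\neq 0$ (no edge of $G$ is horizontal, since the horizontal direction avoids every $\epsilon$-corridor), a short count of the points of each cluster lying above the line through $v$-point $i$ and $w$-point $j$ gives $i-1$ (resp.\ $k-i$) from the $v$-cluster and $j-1$ (resp.\ $k-j$) from the $w$-cluster, according to the sign of $E$. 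The halving condition that exactly $k-1$ points lie on each side reduces in every case to $i+j=k+1$. Hence the traversing edges join index $i$ to index $k+1-i$; equivalently, the $m$-th left point of one cluster is matched to the $m$-th right point of the other.

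With the pairing in hand the rest is bookkeeping. Since a parallel fission is plain, every halving edge of $H$ is traversing (the corollary following Theorem~\ref{thm:corridor}), and each such edge sends index $i$ to index $k+1-i$. I would define $S_m$ to be the set of all index-$m$ and index-$(k+1-m)$ points over all clusters, for $1\le m\le \lceil k/2\rceil$; these sets partition the vertices of $H$ (pairing index $m$ with index $k+1-m$, with a lone middle index when $k$ is odd). Because the matching $i\mapsto k+1-i$ is an involution, traversing along any path in $H$ alternates between indices $m$ and $k+1-m$, so no edge ever leaves its set $S_m$. Therefore each $S_m$ is a union of connected components of $H$, and every connected component lies in a single $S_m$, which is exactly the assertion.

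The main obstacle is the matching computation: one must verify that the traversing edges \emph{reverse} the left-to-right order, giving the pairing $i\leftrightarrow k+1-i$, rather than preserving it, since it is precisely this reversal that makes $\{m,k+1-m\}$---and not $\{m\}$ alone---the invariant carried along each component. Care is also needed to label all clusters by one global horizontal orientation so that the reversal is coherent across different edges, and to check that the tiny within-cluster perturbation used to avoid collinearity does not disturb the strict inequalities in the point count, so that the combinatorial pairing is unchanged.
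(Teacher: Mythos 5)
Your proposal is correct and follows the same route as the paper: the paper's entire proof is the one-sentence assertion that a traversing edge starting at the $m$-th point from the left must end at the $m$-th point from the right, which is exactly the matching $i+j=k+1$ you establish by the explicit side-count. You simply supply the computation and the bookkeeping (the sets $S_m$) that the paper leaves implicit.
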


\begin{proof}
If a traversing edge from $v$ to $w$ cluster starts with the $m$-th point from the left, it has to connect to the $m$-th point from the right.
\end{proof}

The lemma allows to reduce the description of the structure of the parallel $k$-fission graph to 2-fission graphs. But first we add a standard notation: $K_n$ is a complete graph with $n$ vertices.

\begin{lemma}
The parallel 2-fission of the graph $G$ is the tensor product of $G$ and $K_2$.
\end{lemma}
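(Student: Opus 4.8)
The plan is to exhibit an explicit isomorphism between the halving edges graph of the parallel $2$-fission $H$ and the tensor product $G \times K_2$. Recall that $G \times K_2$ has vertex set $V(G) \times \{1,2\}$, with $(u,i)$ adjacent to $(u',j)$ precisely when $uu'$ is an edge of $G$ and $i \neq j$. After rotating so that every cluster lies on a horizontal line, I would label the two points of the $v$ cluster as $v_1$ (the left point) and $v_2$ (the right point), and define $\phi$ by $\phi(v_1) = (v,1)$ and $\phi(v_2) = (v,2)$. Since $|H| = 2|G| = |V(G) \times \{1,2\}|$, the map $\phi$ is automatically a bijection on vertices, so the whole content of the lemma is that $\phi$ and its inverse preserve adjacency.

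First I would pin down the edge set of $H$. Because a parallel fission is plain, $H$ has no non-traversing edges; by Lemma~\ref{thm:traversing} each edge $vw$ of $G$ contributes exactly two traversing edges, and by the first lemma of Section~\ref{sec:fission} non-edges of $G$ contribute none. Hence every edge of $H$ traverses a pair of clusters $v,w$ with $vw \in E(G)$, with exactly two such edges per edge of $G$. The preceding parallel-fission lemma identifies which two: the $m$-th point from the left in the $v$ cluster joins the $m$-th point from the right in the $w$ cluster, for $m = 1,2$. For $k = 2$ this says $v_1$ (first from the left) joins $w_2$ (first from the right), and $v_2$ (second from the left) joins $w_1$ (second from the right). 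So the two traversing edges over $vw$ are exactly $v_1 w_2$ and $v_2 w_1$.

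It then remains to match this against the tensor product. Under $\phi$ the edges $v_1 w_2$ and $v_2 w_1$ map to $(v,1)(w,2)$ and $(v,2)(w,1)$, which are precisely the two edges of $G \times K_2$ lying over $vw$ (adjacency there forces distinct second coordinates); conversely every edge of $G \times K_2$ arises this way for some $vw \in E(G)$. Thus $\phi$ carries the full edge set of $H$ onto the full edge set of $G \times K_2$ and introduces no spurious adjacencies, so $\phi$ is the desired isomorphism. One may note in passing that this recovers the familiar description of $G \times K_2$ as the bipartite double cover of $G$.

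The only real subtlety, and the step I would guard most carefully, is the \emph{crossing} pattern of the two traversing edges: it is essential that $v_1$ connects to $w_2$ rather than to $w_1$. A left-to-left, right-to-right matching would instead yield two disjoint copies of $G$, not the tensor product. This crossing is exactly the left-to-right matching supplied by the preceding parallel-fission lemma, so the argument hinges on invoking that lemma with the correct orientation; once that is fixed, everything else is routine bookkeeping on the bijection $\phi$.
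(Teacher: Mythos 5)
Your proof is correct and follows exactly the route the paper intends: the paper declares the lemma ``straightforward'' from the preceding parallel-fission lemma and the definition of the tensor product, and your write-up simply makes that argument explicit, including the key crossing pattern $v_1w_2$, $v_2w_1$ of the two traversing edges. No discrepancies with the paper's (essentially omitted) proof.
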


The proof of the lemma is straightforward after we define tensor products of two graphs that was introduced by A.~N.~ Whitehead and B.~Russell in \cite{WR}.

The \textit{tensor product} $G_1 \times G_2$ of graphs $G_1$ and $G_2$ is a graph such that the vertex set of $G_1 \times G_2$ is the Cartesian product of the vertex sets of $G_1$ and $G_2$; and any two vertices $(v,v')$ and $(w,w')$ are adjacent in $G_1 \times G_2$ if and only if $v$ is adjacent with $w$ and $v'$ is adjacent with $w'$. In particular if $G_2$ is $K_2$, then any two vertices $(v,v')$ and $(w,w')$ are adjacent if $v$ is adjacent with $w$ and $v'$ and $w'$ are two different vertices of $K_2$.

Now we present a theorem which immediately follows from the two lemmas above.

\begin{theorem}
The parallel $k$-fission of the graph $G$ is the union of $k/2$ tensor products of $G$ and $K_2$ if $k$ is even. If $k$ is odd, then parallel $k$-fission of the graph $G$ is the union of $(k-1)/2$ tensor products of $G$ and $K_2$ and one copy of $G$.
\end{theorem}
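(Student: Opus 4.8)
The plan is to use the first lemma to split the parallel $k$-fission into components indexed by pairs of ``levels'' within the clusters, and then to identify each component via the second lemma. After rotating so that every cluster lies on a horizontal line, label the $k$ points of each cluster $1, 2, \dots, k$ from left to right. The first lemma says that a traversing edge leaving the $m$-th point from the left must arrive at the $m$-th point from the right; since the $m$-th point from the left is point $m$ and the $m$-th point from the right is point $k-m+1$, every traversing edge joins point $m$ of one cluster to point $k-m+1$ of the adjacent cluster. Consequently the whole fission is the disjoint union of the subgraphs supported on the level pairs $\{1,k\}, \{2,k-1\}, \dots$, with no edges running between different pairs.

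Next I would examine a single pair $\{m, k-m+1\}$ with $m \neq k-m+1$. Restricting to these two points in every cluster leaves a configuration with exactly two points per cluster, point $m$ on the left and point $k-m+1$ on the right, which is itself a parallel $2$-fission of $G$. By the second lemma its underlying graph is the tensor product $G \times K_2$: point $m$ of $v$ connects to point $k-m+1$ of $w$, and point $k-m+1$ of $v$ connects to point $m$ of $w$, exactly when $vw$ is an edge of $G$, which is precisely the $K_2$-tensor adjacency.

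It then remains to count the level pairs. If $k$ is even, the pairs $\{1,k\}, \dots, \{k/2,\,k/2+1\}$ use up all $k$ levels, giving $k/2$ copies of $G \times K_2$. If $k$ is odd, the same pairing exhausts every level except the central one $m^\ast = (k+1)/2$, for which $k-m^\ast+1 = m^\ast$; this accounts for $(k-1)/2$ tensor-product components. On the central level, point $m^\ast$ of $v$ connects to point $m^\ast$ of $w$ precisely when $vw$ is an edge, so the leftover component is a copy of $G$. Collecting the pieces gives the stated decomposition.

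The step needing the most care is confirming that the restriction to a level pair is genuinely a parallel $2$-fission to which the second lemma applies---that points $m$ and $k-m+1$ really serve as the left and right points of a two-point cluster, and that deleting the other levels introduces no new halving edges. This is where I would invoke the plainness of parallel fission and the corridor condition (the connecting directions avoid every $\epsilon$-corridor) to guarantee that the decomposition of the first lemma is exactly into these level pairs; once that is secured, the theorem reduces to the bookkeeping over the $\lfloor k/2 \rfloor$ pairs together with the single possible central level.
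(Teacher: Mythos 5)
Your proposal is correct and follows essentially the same route as the paper: the paper states that the theorem ``immediately follows from the two lemmas above,'' and your argument is precisely the intended elaboration---use the first lemma to split the fission into level pairs $\{m,\,k-m+1\}$, identify each genuine pair as a parallel $2$-fission equal to $G\times K_2$ by the second lemma, and observe that for odd $k$ the self-paired central level contributes one copy of $G$. Your closing remark about verifying that the restriction to a level pair is a bona fide parallel $2$-fission is a fair point of care that the paper glosses over, but it does not change the approach.
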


The following corollaries follow from properties of tensor products.

\begin{corollary}
The parallel $k$-fission of the graph $G$ is bipartite.
\end{corollary}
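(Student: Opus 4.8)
The plan is to deduce bipartiteness directly from the structural theorem just established, using the single fact that the tensor product of any graph with $K_2$ is bipartite. First I would recall why $G \times K_2$ is bipartite: its vertices are the pairs $(v,x)$ with $x$ a vertex of $K_2$, and, as the definition above records, $(v,x)$ is adjacent to $(w,y)$ only when $x$ and $y$ are the two \emph{different} vertices of $K_2$. Coloring each vertex by its $K_2$-coordinate is therefore a proper $2$-coloring, so $G \times K_2$---the bipartite double cover of $G$---is bipartite regardless of whether $G$ is.

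Next I would invoke the elementary fact that a disjoint union of bipartite graphs is bipartite, obtained by $2$-coloring each component independently. Since the components of the preceding lemma live on pairwise disjoint vertex sets (the $m$-th points are distinct from the $m'$-th points when $m \neq m'$), the union in the theorem is genuinely disjoint. Hence for even $k$ the parallel $k$-fission is a disjoint union of $k/2$ copies of $G \times K_2$, and bipartiteness follows at once. This disposes of the even case with no computation.

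The step needing the most care---and what I expect to be the real obstacle---is the odd case. For odd $k$ the theorem contributes, beyond the $(k-1)/2$ tensor-product components, one extra copy of $G$ arising from the central point of each cluster, where the $m$-th-from-left and $m$-th-from-right points coincide. That copy is bipartite only when $G$ is, and in general $G$ need not be: the paper's own computation gives the parallel $3$-fission of a triangle as the disjoint union of a $6$-cycle and a $3$-cycle, the latter being odd. I would therefore state the corollary for even $k$ without restriction, and for odd $k$ observe that the parallel fission is bipartite exactly when $G$ contains no odd cycle, since the only possible obstruction is that central copy of $G$.
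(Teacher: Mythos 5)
Your argument for even $k$ is exactly the one the paper intends: the decomposition theorem plus the fact that $G\times K_2$ is the bipartite double cover (properly $2$-colored by the $K_2$-coordinate), together with the observation that a disjoint union of bipartite graphs is bipartite. The paper gives no proof beyond the remark that the corollaries ``follow from properties of tensor products,'' so there is nothing more to match there. More importantly, your caveat about odd $k$ is not a defect in your proof but a genuine error in the stated corollary: for odd $k$ the decomposition contains a full copy of $G$ on the middle points of the clusters, and if $G$ has an odd cycle that component is not bipartite. The paper's own computation in Section~\ref{sec:multiplication} confirms this --- the parallel $3$-fission of a triangle is a $6$-cycle together with a $3$-cycle, and a graph containing a $3$-cycle component is not bipartite. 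So the corollary as written holds only for even $k$ (or, for odd $k$, exactly when $G$ is itself bipartite), precisely as you state. Your qualified version is the correct statement, and your proof of it is complete.
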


\begin{corollary}
If $G$ is bipartite, then the parallel $k$-fission of $G$ is $k$ copies of $G$.
\end{corollary}

\section{Forest Fission}\label{sec:forest}

In previous sections we considered $k$-fissions where there are no halving edges within one cluster, namely plain fissions. Now we discuss examples where there are a lot of halving edges within the cluster. But first, we introduce some definitions.

A graph is called a \textit{1-tree} or \textit{unicyclic} if it is connected and has exactly one cycle. A graph in which each connected component is a 1-tree is called a \textit{1-forest}.

It is easy to check that if graph $G$ is a 1-forest, then we can direct its edges in such a way that every vertex has outdegree 1. This directed graph is called the \textit{functional graph}. 

We define a new type of fission which we call \textit{forest} fission. As a prerequesite, our halving edges graph $G$ must be a 1-forest. Each cluster will be a configuration of points that match some given halving lines configuration $B$. We define the forest fission as follows. Replace each vertex of $G$ by a segmenterized copy of $B$ that fits in an $\epsilon$ neighborhood of the vertex. In addition, let the segmenterized vertices be aligned with the edge coming out of the original vertex in the functional graph of $G$ and the vertex divide the point in $B$ in half. 

Figure~\ref{fig:forestfission} gives an example of forest fission. We start with a smallest halving line configuration that is a forest. In this case the configuration has 6 vertices. The left picture shows the directions of each edge so that the outdegree is 1. In the middle picture we replace each vertex with a segment of two vertices, so that the segment is oriented in the direction of the outedge and the middle of the segment is located at the former vertex. The last picture shows the resulting forest fission configuration.

\begin{figure}[htbp]
\begin{center}
\includegraphics[scale=0.3]{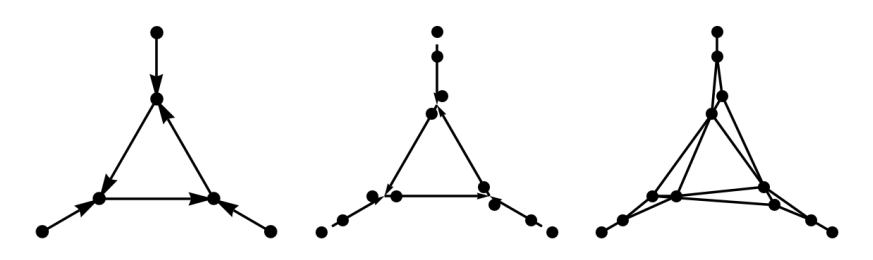}
\end{center}
  \caption{Forest Fission.}\label{fig:forestfission}
\end{figure}

\begin{lemma}
The non-traversing halving lines of the forest fission within any given cluster are exactly the halving lines of that cluster when all other clusters are removed.
\end{lemma}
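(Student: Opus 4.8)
The plan is to fix one cluster, analyze an arbitrary line joining two of its points, and reduce the halving condition in $H$ to the halving condition of the isolated cluster by showing that everything outside the cluster is split symmetrically. Let the chosen cluster replace the vertex $v$, let $n=|G|$, and let $v\to u$ be the outedge of $v$ in the functional graph of $G$, so that the $v$-cluster is the segmenterized copy of $B$ aligned along the direction $vu$. Every line joining two points of this cluster is then nearly parallel to $vu$, so all such lines lie in the $\epsilon$-corridor of the halving edge $vu$; this is exactly the situation covered by Theorem~\ref{thm:corridor}. I would take an arbitrary pair $p,q$ in the $v$-cluster and count how the remaining $kn-2$ points of $H$ fall on the two sides of the line $pq$. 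The clusters other than those of $v$ and $u$ are easy to dispose of first: since $pq$ differs from the line $vu$ by at most order $\epsilon$, each such cluster lies wholly on the side of $pq$ determined by the position of its vertex relative to $vu$ in $G$, and because $vu$ is a halving edge these $n-2$ vertices split evenly. Hence these clusters contribute exactly $k(n-2)/2$ points to each side of $pq$, for \emph{every} choice of $p,q$.

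The main obstacle is the cluster of $u$, which the line $pq$ actually crosses, and here I would lean on the alignment and midpoint conditions built into forest fission. The line $pq$ is a small perturbation of the line through $v$ and $u$ in direction $vu$: as the perturbation inside the $v$-cluster shrinks, $pq$ tends to a line through $u$ in the direction $vu$. Since every cycle of a simple $1$-forest has length at least $3$, the outedge direction of $u$ differs from $vu$, so this limiting line meets the segmenterized $u$-cluster transversally through its midpoint $u$; as $u$ divides $B$ in half, it separates the $u$-cluster into two equal halves of $k/2$ points. The delicate, quantitative part is to control the actual (unperturbed-to-the-limit) line: the offset of $pq$ from $u$ is of order $D\delta/\epsilon$, where $D$ is the distance from $v$ to $u$, $\epsilon$ the cluster length, and $\delta$ the perpendicular perturbation. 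I would therefore fix $\delta$ small enough that this offset stays below the spacing between consecutive points of $B$ near its median; since $\delta$ is a free parameter of the segmenterizing, this is always attainable, and then $pq$ splits the $u$-cluster exactly $k/2$ to $k/2$ as well.

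Finally I would assemble the count. Everything outside the $v$-cluster---the $u$-cluster together with all the others---is split evenly by $pq$, contributing $k(n-1)/2$ points to each side. Consequently $pq$ is a halving line of $H$ precisely when it bisects the remaining $k-2$ points of the $v$-cluster, which is exactly the condition that $pq$ be a halving line of the $v$-cluster taken alone. This yields the claimed equality between the non-traversing halving lines of the forest fission within the cluster and the halving lines of that cluster with all others removed. I expect the transversal-bisection step for the out-neighbor's cluster to be the crux, since it is the only place where the specific geometry of forest fission---alignment with the functional-graph outedge and the midpoint condition---is genuinely used, whereas the balance of the remaining clusters follows routinely from the $\epsilon$-corridor property and the fact that $vu$ is a halving edge.
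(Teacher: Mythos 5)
Your proof is correct and follows essentially the same route as the paper's: every line through two points of the $v$-cluster splits the remaining clusters evenly (because such a line stays in the $\epsilon$-corridor of the outedge $vu$, which is a halving edge of $G$) and splits the out-neighbor's cluster exactly in half (because that cluster is centered at $u$ with $u$ as its median and is met transversally), which reduces the halving condition in $H$ to the halving condition of $B$ alone. The only difference is that you make explicit two points the paper merely asserts --- the transversality (that $u$'s outedge direction differs from $vu$ in a simple $1$-forest) and the quantitative offset bound of order $D\delta/\epsilon$ requiring $\delta$ to be chosen small --- so your write-up is a more rigorous rendering of the same argument.
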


\begin{proof}
Consider two points in the cluster of $v$. Suppose that in $G$, the line from $v$ was directed towards $w$. Every line passing through two points in the same cluster divides all clusters other than $v$ and $w$ in half. Also, it divides the points in the $w$-cluster in half. Therefore, it divides all points not in $v$-cluster in half. Thus, a line through two points in the $v$-cluster is the halving line of the fission graph if and only if it is a halving line in $B$, or equivalently, an non-traversing halving line in $G$.
\end{proof}

\begin{corollary}
If graph $G$ has $e_G$ edges and graph $B$ has $e_B$ edges, then the forest fission gives a set with $|G||B|$ vertices and $|B|e_G+e_Ge_B$ edges.
\end{corollary}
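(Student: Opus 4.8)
The plan is to count vertices and edges of the forest fission separately by type, and then to invoke the defining property of a 1-forest to put the edge count in the stated form. The vertex count is immediate: the forest fission replaces each of the $|G|$ vertices of $G$ by a copy of $B$, which has $|B|$ points, so there are $|G||B|$ vertices and no further argument is needed.

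For the edges, I would partition them into traversing edges and non-traversing edges, exactly as the surrounding text has set up. For the traversing edges, note that the forest fission is a $k$-fission with $k=|B|$, so Lemma~\ref{thm:traversing} applies verbatim: each of the $e_G$ edges of $G$ gives rise to exactly $|B|$ traversing halving edges in the fission, and (by the first lemma of Section~\ref{sec:fission}) no halving edge traverses a pair of non-adjacent clusters. This yields exactly $|B|e_G$ traversing edges. For the non-traversing edges, I would apply the lemma immediately preceding this corollary, which identifies the non-traversing halving lines within each cluster with the halving lines of that cluster taken in isolation. Since each cluster is a copy of $B$ and $B$ has $e_B$ halving edges, each of the $|G|$ clusters contributes exactly $e_B$ non-traversing edges, for a total of $|G|e_B$.

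The key step, and the only place the 1-forest hypothesis enters, is the observation that a 1-forest satisfies $e_G=|G|$. Each connected component of $G$ is a 1-tree, and a connected graph with exactly one cycle has as many edges as vertices (a spanning tree on $n_i$ vertices contributes $n_i-1$ edges, and the single extra edge that closes the unique cycle brings this to $n_i$). Summing over components gives $e_G=\sum_i n_i=|G|$. Substituting this identity into the non-traversing count rewrites $|G|e_B$ as $e_Ge_B$, so the total edge count becomes $|B|e_G+e_Ge_B$, as claimed.

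I do not anticipate a genuine obstacle: once the edges are correctly split into the two types, both counts follow from lemmas already established, and the vertex count is trivial. The one subtlety worth flagging explicitly is that the displayed formula silently encodes the 1-forest structure through the substitution $|G|=e_G$; a reader who overlooks this might expect the more symmetric-looking $|B|e_G+|G|e_B$ and find the asymmetric form surprising. Stating the identity $e_G=|G|$ plainly is what reconciles the two expressions and completes the proof.
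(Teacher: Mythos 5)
Your proof is correct and matches the paper's (implicit) argument: traversing edges are counted via Lemma~\ref{thm:traversing}, non-traversing edges via the lemma immediately preceding the corollary, and the identity $e_G=|G|$ for 1-forests (which the paper states right after the corollary) reconciles the asymmetric form $e_Ge_B$ with the natural count $|G|e_B$. No issues.
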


As $G$ is 1-forest, we know that $|G| = e_G$.

The smallest known unicyclic halving edges graph has 6 vertices and is depicted on Figure~\ref{fig:graph6}. If we use this graph as $G$ in the forest fission construction we can get from $B$ a graph with $6|B|$ vertices and $6|B|+6e_B$ edges. Suppose we start with $B$ the same as $G$ and create their forest fission graph. Then we fission the graph $G$ with the resulting graph from the previous step and continue this recursively. We will get a sequence of graphs $g(k)$ that have $6^k$ vertices and $k\cdot 6^k$ edges. That gives us another construction of an $n$-point configuration with at least $n\log n$ halving edges.

\section{Defission}\label{sec:defission}

Thanks to multiplication, we have a notion of divisibility. If $H$ is a fission of $G$, we will say that $G$ \textit{divides} $H$. The most natural follow-up questions to ask are those related to divisibility of numbers.

For instance, given a geograph, we can trivially represent it as the fission of another smaller graph:

\begin{lemma}
Every halving line graph $G$ divides the 2-path graph.
\end{lemma}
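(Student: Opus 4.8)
The plan is to realize $G$, on $n$ vertices, as an $(n/2)$-fission of the $2$-path --- the single-edge graph on two vertices $u,w$ --- which is exactly the representation of $G$ as a fission of a smaller graph that the preceding sentence promises. First I would segmentarize $G$; since segmentarizing is an affine transformation it leaves the halving-edge graph untouched, and it places all $n$ points in a thin horizontal strip with distinct $x$-coordinates. Choosing ``up'' as in the chain construction then splits the vertices into a left half $L$ and a right half $R$, each of size $n/2$. I would declare $L$ to be the cluster of $u$ and $R$ the cluster of $w$, so that the two clusters together account for all $n$ points and the fission parameter is $k=n/2$.

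Next I would verify that this bipartition genuinely meets the definition of an $(n/2)$-fission. The smallness of $\epsilon$ is imposed for two reasons only: to keep distinct clusters from interfering, and to enforce the $\epsilon$-corridor condition governing how a line joining two clusters meets the \emph{remaining} clusters. Because the $2$-path has only the two clusters $u$ and $w$, there are no other clusters, so both requirements hold vacuously, and any equal bipartition of the point set is an admissible $(n/2)$-fission of the $2$-path. Here segmentarizing does more than fix notation: since every connecting line now has nearly horizontal direction, all within-half halving edges automatically lie along the single $\epsilon$-corridor of the edge $uw$, in agreement with Theorem~\ref{thm:corridor}.

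With the bipartition in hand the conclusion is essentially automatic, since the underlying graph of the segmentarized configuration is $G$ itself and we have just exhibited that configuration as an $(n/2)$-fission of the $2$-path. As a consistency check, Lemma~\ref{thm:traversing} applied to the unique edge $uw$ with $k=n/2$ predicts exactly $n/2$ traversing edges; these are precisely the $n/2$ halving edges of $G$ running from $L$ to $R$, one supplied by each of the $n/2$ chains (Corollary~\ref{thm:traversinginduced}, whose relevant subgraph is the whole configuration). Every remaining halving edge of $G$ joins two vertices of the same half and is therefore a non-traversing edge, consistent with Theorem~\ref{thm:corridor}.

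The step I expect to be the main obstacle is the second one: reconciling the spread-out halves $L$ and $R$ with the intuition that a cluster should be an $\epsilon$-tight blob around a single base point. Indeed, if one literally shrinks each half onto a point, the within-half halving lines can no longer bisect the opposite cluster and the non-traversing edges are destroyed, so a naive tightening produces the wrong graph. The point I would stress in resolving this is that tightness is never required for its own sake; it is invoked solely to guarantee non-interference and the corridor property, both of which are automatic when there are only two clusters. Making this precise, and confirming that keeping the halves spread forfeits no hidden constraint, is the crux of the argument, whereas the count of traversing edges is immediate from the chain structure already recorded.
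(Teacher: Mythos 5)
Your construction is essentially the paper's: segmentarize $G$ and declare the left and right halves to be the two clusters of an $(n/2)$-fission of the 2-path, so the only traversing edges are the $n/2$ chain-crossing edges and everything else is non-traversing. Where you diverge is the treatment of cluster smallness, and this divergence is instructive. The paper's proof says to ``move these halves away from each other, so that each half is a cluster,'' i.e., it tries to literally satisfy the requirement that $\epsilon$ be much smaller than the distance between the base points; you refuse to do this and argue instead that with only two clusters the $\epsilon$-corridor conditions are vacuous and the tightness clause serves no purpose. Your instinct is not just acceptable but necessary: if the halves are genuinely pulled far apart (or shrunk to tight blobs), a line through two points of one half, extended across the now-enormous gap, passes entirely above or entirely below the opposite half --- its small slope is magnified by the separation --- so every within-half halving edge of $G$ is destroyed. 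Since, e.g., the star has $n-1 > n/2$ edges (and a tight fission of the 2-path forces every vertex to have degree at most $n/2$, the traversing edges forming a perfect matching), the lemma would actually be \emph{false} under the strict, tight-cluster reading; it is true only under the loose reading you make explicit. So your proof and the paper's buy the same conclusion from the same construction, but yours correctly identifies that the ``move the halves apart'' step must be omitted (or kept small enough to change no halving relations), whereas the paper's phrasing, taken literally, breaks the argument it is meant to complete. One small overstatement on your side: not ``any equal bipartition'' qualifies --- the two parts should at least be linearly separated so that they occupy disjoint neighborhoods of the two base points; your actual left/right split of the segmentarized configuration of course has this property.
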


\begin{proof}
We can segmentarize $G$, divide vertices into left and right halves and move these halves away from each other, so that each half is a cluster.
\end{proof}

As we mentioned before if $G$ divides $H$, then $|G|$ divides $|H|$. The following lemma follows:

\begin{lemma}
If $G$ has $2p$ vertices, where $p$ is prime, then it only has two divisors itself and the two-path. 
\end{lemma}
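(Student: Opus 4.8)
The plan is to argue by analyzing the possible values of $|G|$ together with the established correspondence between divisibility of geographs and divisibility of their vertex counts. Since $G$ divides $H$ implies $|G|$ divides $|H|$, any proper divisor $D$ of $G$ must satisfy that $|D|$ is a divisor of $2p$ with $1 < |D| \le 2p$. Because $p$ is prime, the divisors of $2p$ are exactly $1, 2, p, 2p$. A halving edges graph must have an even number of vertices (halving lines are defined only for an even number of points), so $|D|$ cannot equal $p$ when $p$ is odd, and the case $|D| = 1$ is excluded since a single point has no halving line and cannot serve as a nontrivial divisor. This leaves $|D| \in \{2, 2p\}$, corresponding to either the two-path or $G$ itself.

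First I would pin down the case $|D| = 2$: a halving edges graph on two vertices is forced to be the two-path (a single edge), since two points in general position always determine exactly one halving line, so this is the unique divisor of size $2$, and the earlier lemma guarantees that the two-path does indeed divide $G$. Next I would treat $|D| = 2p$, which means $k = |G|/|D| = 1$, so the only size-$2p$ divisor is $G$ under the identity $1$-fission. The bulk of the argument is therefore the elimination of the intermediate case.

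The main obstacle will be ruling out $|D| = p$. Here I must handle the parity of $p$. If $p$ is odd, then $p$ is itself odd and a halving edges graph requires an even vertex count, so no geograph $D$ with $|D| = p$ exists and the case is vacuous. The delicate case is $p = 2$, where $2p = 4$ and $p = 2$ is even, so a priori $D$ could have $|D| = 2 = p$; but here the divisors $2$ and $p$ coincide, so no new possibility arises, and the statement still reads correctly as ``itself and the two-path'' since $G$ on four vertices is either the quadrilateral matching (which is a $2$-fission of the two-path, i.e.\ equals the cross of two two-paths) or the triangle-star. I would verify that in each case the only divisors remain $G$ and the two-path, which I expect to require a short direct check rather than a general principle.

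Thus the proof reduces to the arithmetic observation that the only divisors of $2p$ that can be realized as vertex counts of a halving edges graph dividing $G$ are $2$ and $2p$, combined with the uniqueness of the two-vertex halving graph and the identity fission. The forward-looking concern is making sure the exclusion of $|D|=p$ is airtight in the even-$p$ boundary case, since that is where divisibility of the integer $2p$ does not immediately force the parity contradiction; everything else follows mechanically from $|G| \mid |H|$ and the definitions already established.
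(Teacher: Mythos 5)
Your proposal is correct and follows exactly the route the paper intends: the paper derives this lemma directly from the observation that if $G$ divides $H$ then $|G|$ divides $|H|$, so a divisor of $G$ must have $2$ or $2p$ vertices (the odd divisors $1$ and $p$ being impossible for a halving configuration), forcing it to be the two-path or $G$ itself. Your extra care with the $p=2$ boundary case and the uniqueness of the two-vertex halving graph only makes explicit what the paper leaves implicit.
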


As we mentioned before, fission is a transitive operation, so divisibility is transitive as well.

Are there ``prime" halving geographs? If fission is similar to multiplication, what are prime building blocks of halving edges graphs? Let us call a halving edges graph \textit{primitive} if it only divides itself and 2-path. Clearly if the geograph in question has $2p$ vertices, where $p$ is prime, then it is primitive. Are there other non-trivially primitive graphs?

\begin{lemma}
The graph that is a cross of a 6-star and a 2-path is primitive.
\end{lemma}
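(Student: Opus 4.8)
The plan is to treat primitivity as a statement about divisors and to settle it with a short case analysis organized by vertex count. Write $G$ for the cross of the $6$-star and the $2$-path. By the description of the cross and of the star configuration, $G$ is the disjoint union of the star $K_{1,5}$ (the $6$-star: a center of degree $5$ with five leaves) and a single edge $K_2$ (the $2$-path), so $|G| = 8$ and $G$ has two connected components, on $6$ and on $2$ vertices respectively. If a halving edges graph $D$ divides $G$, then $|D|$ is even and divides $|G| = 8$, so $|D| \in \{2,4,8\}$. The value $|D| = 2$ forces $D$ to be the $2$-path (the only halving graph on two points), and $|D| = 8$ forces the fission factor to be $1$, hence $D = G$. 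Thus the entire problem reduces to showing that $G$ is not a $2$-fission of any halving edges graph $D$ on four vertices.

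First I would enumerate the four-vertex halving edges graphs. As recorded in the ``Four points'' example, four points in general position give either a quadrilateral hull, whose underlying graph is the perfect matching $2K_2$, or a triangular hull, whose underlying graph is the claw $K_{1,3}$; these are the only two, and $2K_2$ is the only disconnected one. So a hypothetical four-vertex divisor $D$ is one of these, and I would rule out each in turn.

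For $D = K_{1,3}$, which is connected, I would apply Corollary~\ref{thm:fissionConCor}: in a fission of a connected graph, every connected component has a number of vertices divisible by $|D| = 4$. But the components of $G$ have $6$ and $2$ vertices, and $6$ is not divisible by $4$, a contradiction. For $D = 2K_2$, which is disconnected, I would apply the first lemma of Section~\ref{sec:fission} to each of the four non-edges joining the two edges of $2K_2$: no halving edge of $G$ traverses a non-edge, so $G$ has no edges between the pair of clusters coming from one edge of $2K_2$ and the pair coming from the other. Hence the vertex set of $G$ splits into two blocks of four vertices with no edges between them, forcing every connected component of $G$ to have at most four vertices. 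This contradicts the presence of the six-vertex component $K_{1,5}$. Either way no four-vertex divisor exists, so the only divisors of $G$ are the $2$-path and $G$ itself, which is exactly primitivity.

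I expect the only delicate point to be the disconnected case, where one must be sure that a $2$-fission of $2K_2$ really decomposes into two non-interacting pieces; this is precisely what the first lemma of Section~\ref{sec:fission} supplies when applied to all four non-edges. The connected case is essentially automatic once Corollary~\ref{thm:fissionConCor} is invoked, since the mismatch between $6$ and a multiple of $4$ does all the work.
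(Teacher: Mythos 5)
Your proof is correct and follows essentially the same route as the paper: reduce to a hypothetical $4$-vertex divisor and apply Corollary~\ref{thm:fissionConCor} to contradict the existence of the $6$-vertex star component. You are in fact more careful than the paper, whose one-line proof simply asserts that the $4$-vertex divisor would have to be connected; your separate elimination of the disconnected candidate $2K_2$, using the lemma that non-edges of $G$ admit no traversing halving edges so that every component of the fission would be trapped in a $4$-vertex block, supplies the justification that the paper's argument leaves implicit.
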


\begin{proof}
The only possibility for this to be a non-trivial fission is for it to be a 2-fission of a connected graph with 4 vertices. Then by Corollary~\ref{thm:fissionConCor} each connected component has to have at least 4 vertices. Contradiction.
\end{proof}

We can imitate the proof above to easily produce other primitive underlying geographs which do not have $2p$ vertices, with several connected components. However, it is unclear whether there exists a primitive connected halving edges graph; this poses an interesting object of study for future exploration into fission and divisibility.

\section{Acknowledgements}

We are grateful to Professor Jacob Fox for helpful discussions. The second author was supported by UROP.

\end{document}